\newtheorem{theorem}{Theorem}[section]
\newtheorem{lemma}[theorem]{Lemma}
\newtheorem{proposition}[theorem]{Proposition}
\theoremstyle{definition}
\theoremstyle{remark}
\newtheorem{remark}[theorem]{Remark}
\numberwithin{equation}{section}
\renewcommand{\Re}{\operatorname{Re}} 
\renewcommand{\Im}{\operatorname{Im}} 
\newcommand{\area}{\operatorname{area}}
\newcommand{\ecc}{\operatorname{sk}}
\def \G {{\Gamma}}
\def \g {{\gamma}}
\def \R {{\mathbb R}}
\def \H {{\mathbb H}}
\def \C {{\mathbb C}}
\def \Z {{\mathbb Z}}
\def \GinfmodG {{\Gamma_{\!\infty}\!\!\setminus\!\Gamma}}
\def \GmodH {{\Gamma\setminus\H}}
\def \vol {\hbox{vol}}
\def \sl  {\hbox{SL}_2(\mathbb Z)}
\def \slr  {\hbox{SL}_2(\mathbb R)}
\newcommand{\mattwo}[4]
{\left(\begin{array}{cc}
                        #1  & #2   \\
                        #3 &  #4
                          \end{array}\right) }
\newcommand{\inprod}[2]{\left \langle #1,#2 \right\rangle}
\begin{document}
\date{\today}
\thanks{The first author was funded by a Steno Research Grant  from The Danish Natural Science Research Council. The second author was supported by the Israel Science Foundation (grant No. 925/06).}
\title[statistics of minimal solutions and equidistribution]
{On the statistics of the minimal solution of a linear Diophantine
equation and uniform distribution of the real part of orbits in
hyperbolic spaces}    
\author{Morten S. Risager}
\address{Department of Mathematical Sciences, University of Aarhus, Ny Munkegade Building 530, 8000 Aarhus C, Denmark}
\email{risager@imf.au.dk}
\author{Ze\'ev Rudnick}
\address{School of Mathematical Sciences, Tel Aviv University, Tel Aviv 69978, Israel}
\email{rudnick@post.tau.ac.il}

\subjclass[2000]{Primary 11J71; Secondary  11M36}
\begin{abstract}
We study a variant of a problem considered by Dinaburg
and Sina\u\i{} on the statistics of the  minimal solution to a linear
Diophantine equation. We show that the signed ratio between  the
Euclidean norms of the minimal solution and the coefficient vector is
uniformly distributed modulo one.  We reduce the problem to an
equidistribution theorem of Anton Good concerning the orbits of a
point in the upper half-plane under the action of a Fuchsian group. 
\end{abstract}  
\maketitle

\section{Statement of results} \label{sec:statements}
\subsection{} 
For a pair of coprime integers $(a,b)$, the linear Diophantine
equation $ax-by=1$ is well known to have infinitely many integer
solutions $(x,y)$, any two differing by an integer multiple of
$(b,a)$. Dinaburg and Sina\u\i{}  \cite{DinaburgSinaui:1990a} studied the
statistics of the ``minimal'' such solution $v'=(x_0,y_0)$  
when the coefficient vector $v=(a,b)$ varies over
all primitive integer vectors lying in a large box with commensurate
sides.  
Their notion of ``minimality'' was in terms of the $L^\infty$-norm
$|v'|_\infty:=\max(|x_0|,|y_0|)$, and they studied the ratio 
$|v'|_\infty/|v|_\infty$, showing that it is uniformly distributed in
the unit interval.  
Other proofs were subsequently given by Fujii \cite{Fujii:1992a} who
reduced the problem to one about modular inverses, and then used
exponential sum methods, in particular a non-trivial bound on
Kloosterman sums, and by Dolgopyat \cite{Dolgopyat:1994a}, who used
continued fractions.

In this note, we consider a variant of the question by using
minimality with respect to the Euclidean norm $|(x,y)|^2:=x^2+y^2$ and 
study the ratio $|v'|/|v|$  
of the Euclidean norms as the coefficient vector varies over a large
ball. In this case too we find uniform distribution, in the interval
$[0,1/2]$. However, the methods involved appear quite different, as we
invoke an equidistribution theorem of Anton Good \cite{Good:1983a} 
which uses harmonic analysis on the modular curve.

\subsection{A lattice point problem} 
We recast the problem in slightly more general and geometric terms. 
Let $L\subset \C$ be a lattice in the plane, and let $\area(L)$ be the
area of a fundamental domain for $L$. Any primitive vector
$v$ in $L$  can be completed to a basis $\{v,v'\}$ of $L$. 
The vector $v'$ is unique up to a sign change and addition of a
multiple of $v$. In the case of the standard lattice $\Z[\sqrt{-1}]$,
taking $v=(a,b)$ and $v'=(x,y)$,  the condition that $v$, $v'$ 
give a basis of $\Z[\sqrt{-1}]$ is equivalent to requiring 
$ay-bx=\pm 1$.  
The question is: If we pick $v'$ to  minimize
the length $|v'|$ as we go through all possible completions, 
how does the ratio $|v'|/|v|$ between the lengths of $v'$ and $v$ fluctuate?
It is easy to see (and we will prove it below) that the ratio is
bounded, indeed that for a minimizer $v'$ we have  
$$
\frac{|v'|}{|v|} \leq \frac 12 +O(\frac 1{|v|^4})\;.
$$
We will show that the ratio $|v'|/|v|$ is uniformly distributed in
$[0,1/2]$ as  $v$ ranges over all primitive vectors of $L$ in a large
(Euclidean)  ball. 

We refine the problem  slightly by requiring 
that the lattice basis $\{v,v'\}$ is oriented positively, that is
$\Im(v'/v)>0$. Then $v'$ is unique up to addition of an integer
multiple of $v$. For the standard lattice $\Z[\sqrt{-1}]$ and
$v=(a,b)$, $v'=(x,y)$ the requirement is then that $ay-bx=+1$. 
Define  the {\em signed} ratio by
$$\rho(v):=\pm |v'|/|v|$$ 
where we chose $|v'|$ minimal, and the sign is $+$ if the angle
between $v$ and $v'$ is acute, and $-$ otherwise. 

\begin{theorem}\label{unif dist of rho}
As $v$ ranges over all primitive vectors in the
lattice $L$, the signed ratio $\rho(v)$ is uniformly
distributed modulo one.  
\end{theorem}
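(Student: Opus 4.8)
The plan is to express $\rho(v)$ through the action of $\Gamma=\sl$ on $\H$ and then invoke Good's theorem. First I fix a presentation $L=\lambda(\Z+\Z\tau)$ with $\lambda\in\C^{\times}$ and $\tau\in\H$, so that $\area(L)=|\lambda|^{2}\Im\tau$. A primitive vector is $v=\lambda(p+q\tau)$ with $\gcd(p,q)=1$, and a positively oriented completion is $v'=\lambda(r+s\tau)$ with $ps-qr=1$; assembling $g=\mattwo{s}{r}{q}{p}$, one checks $g\in\sl$ and, as M\"obius transformations, $v'/v=g\tau$. Since replacing $v'$ by $v'+mv$ replaces $g$ by $\mattwo{1}{m}{0}{1}g$ and hence $g\tau$ by $g\tau+m$, the admissible ratios $v'/v$ are exactly the points of the horocycle orbit $\{g\tau+m:m\in\Z\}$, all sharing the imaginary part $\Im(g\tau)=\Im\tau/|q\tau+p|^{2}=\area(L)/|v|^{2}$. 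Finally $v\mapsto\Gamma_{\!\infty}g$ is two-to-one onto $\GinfmodG$: it identifies $\pm v$ (a coset determines its bottom row up to sign), and one checks $\rho(-v)=\rho(v)$.

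Next I identify the minimizer. As $|v'|/|v|=|g\tau+m|$ and these points have a common imaginary part, minimizing $|v'|$ amounts to choosing $m$ with $\Re(g\tau)+m\in[-\tfrac12,\tfrac12]$; for that choice $w:=v'/v$ satisfies $\Re w\equiv\Re(g\tau)\pmod 1$ and $\Im w=\area(L)/|v|^{2}$. The sign in $\rho(v)$ is $+$ exactly when the angle between $v$ and $v'$ is acute, that is when $\Re w>0$, so
\[
\rho(v)=\operatorname{sign}(\Re w)\sqrt{(\Re w)^{2}+\bigl(\area(L)/|v|^{2}\bigr)^{2}} .
\]
This yields both the elementary bound $|v'|/|v|\le\tfrac12+O_{L}(|v|^{-4})$ quoted earlier and the estimate $|\rho(v)-\Re w|\le\area(L)/|v|^{2}$; combining, $\rho(v)\equiv\Re(g\tau)\pmod 1$ with an error $O_{L}(|v|^{-2})$ that is uniform in $v$. (When $\Re(g\tau)\in\tfrac12+\Z$ the minimizer is not unique, but the two choices give $\pm\rho(v)$ with $|\rho(v)|=\tfrac12+O_{L}(|v|^{-4})$, hence agree modulo one.)

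Now order the primitive vectors by increasing norm. Because $\Im(g\tau)=\area(L)/|v|^{2}$, the ball $\{|v|\le R\}$ corresponds exactly to the set of cosets in $\GinfmodG$ with $\Im(g\tau)>\area(L)/R^{2}$, a height tending to $0$ as $R\to\infty$. Good's theorem \cite{Good:1983a}, applied to $\Gamma=\sl$, the cusp at $\infty$ (whose stabilizer acts on $\H$ by $z\mapsto z+n$, $n\in\Z$) and the base point $\tau$, says precisely that $\Re(g\tau)\bmod 1$ becomes equidistributed in $\R/\Z$ as $g$ runs over these cosets in this order. To transfer this to $\rho(v)$ I use Weyl's criterion: for every integer $m\neq0$,
\[
\sum_{\substack{v\in L\ \mathrm{primitive}\\ 0<|v|\le R}}\bigl|e^{2\pi i m\rho(v)}-e^{2\pi i m\Re(g\tau)}\bigr|\ \le\ 2\pi|m|\,\area(L)\!\!\sum_{\substack{v\in L\ \mathrm{primitive}\\ 0<|v|\le R}}\!\!\frac{1}{|v|^{2}}\ =\ O_{m,L}(\log R),
\]
which is negligible next to the number $\asymp R^{2}$ of terms (constant depending on $L$); hence the normalized exponential sums of $\rho(v)$ have the same (zero) limit as those of $\Re(g\tau)$, and $\rho(v)$ is uniformly distributed modulo one.

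The analytic substance sits entirely in Good's theorem, which I treat as a black box; what remains is the bookkeeping above --- the dictionary between completions of $v$ and the coset space $\GinfmodG$, and the control of the minimization together with the resulting $O_{L}(|v|^{-2})$ discrepancy. The step I would watch most carefully is precisely the passage from the unordered Euclidean ratio to the \emph{signed} ratio taken modulo one, where the sign of $\Re w$ and the wraparound at $\pm\tfrac12$ must be handled consistently.
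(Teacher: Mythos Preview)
Your proof is correct and follows essentially the same route as the paper: both reduce to Good's theorem by identifying, for each primitive $v$, the quantity $\Re(v'/v)\bmod 1$ with $\Re(\gamma\tau)$ for a suitable $\gamma\in\sl$, and then observe that $\rho(v)$ differs from this by $O_L(|v|^{-2})$. The paper packages the intermediate quantity as the ``skewness'' $\ecc(v)=\Re(v'/v)\bmod 1$ and obtains the comparison $\rho(v)=\ecc(v)(1+O(|v|^{-2}))$ via the angle bound $\sin\alpha\ll |v|^{-1}$, whereas you compute the additive discrepancy $|\rho(v)-\Re w|\le \Im w=\area(L)/|v|^{2}$ directly; your explicit Weyl-sum transfer with the $O(\log R)$ bound makes precise a step the paper leaves as ``asymptotically identical sequences have the same distribution.''
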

Explicitly, let $L_{prim}(T)$ be the set of primitive
vectors in $L$ of norm $|v|\leq T$. It is well known 
that  
\begin{equation*} \#L_{prim}(T) \sim \frac 1{\zeta(2)}
\frac{\pi}{\area(L)} T^2, \quad T\to \infty
\end{equation*}
Theorem~\ref{unif dist of rho} states that for any fixed subinterval
$[\alpha,\beta]\in (-1/2,1/2]$, 
$$ 
\frac 1{\#L_{prim}(T)} \{v\in L_{prim}(T): \alpha<\rho(v)<\beta \}
\to \beta-\alpha   
$$ 
as $T\to \infty$. 

\subsection{Equidistribution of real parts of orbits} 
We will reduce Theorem~\ref{unif dist of rho}  by geometric arguments
to a result of Anton Good \cite{Good:1983a} on uniform distribution of the
orbits of a point in the upper  half-plane under the action of a
Fuchsian group.   

Let $\G$ be discrete, co-finite,
non-cocompact subgroup of $\slr$. 
The group $\slr$ acts on the upper half-plane $\H=\{z\in \C:
\Im(z)>0\}$ by linear fractional transformations.  
We may assume, possibly after  conjugation in 
$\slr$, that $\infty$ is a cusp and that the stabilizer
$\G_{\!\infty}$ of $\infty$ in $\G$ is generated by
\begin{equation}
\pm \mattwo{1}{1}{0}{1}
\end{equation}
which as linear fractional transformation gives  
the unit translation $z\mapsto z+1$. (If $-I\notin \G$ there should be
no $\pm$ in front of the matrix). 
The group $\G=\sl$ is an example of such a group. 
We note that the
imaginary part of $\g(z)$ is fixed on the orbit $\G_{\!\infty}\g z$, and
that the real part modulo one is also fixed on this orbit. 
Good's theorem is 
\begin{theorem}[Good \cite{Good:1983a}]\label{equidistribution} 
Let $\G$ be as above and let $z\in\H$. 
Then $\Re(\G z)$ is uniformly distributed modulo one as $\Im(\g z)\to 0$. 
\end{theorem}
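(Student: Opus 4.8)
The proof goes through Weyl's criterion. Since $\Im(\g z)$ and $\Re(\g z)\bmod1$ depend only on the coset in $\GinfmodG$ (recall that $\G_{\!\infty}$ acts by $z\mapsto z+1$), the assertion is equivalent to showing that, for every nonzero integer $m$,
$$
S_m(\delta):=\sum_{\g\in\GinfmodG,\ \Im(\g z)>\delta}e\!\left(m\Re(\g z)\right)=o\!\left(N(\delta)\right),\qquad
N(\delta):=\#\{\g\in\GinfmodG:\Im(\g z)>\delta\},
$$
as $\delta\to0^{+}$, where $e(t)=e^{2\pi it}$. The idea is to extract the sizes of $N(\delta)$ and $S_m(\delta)$ from the analytic behaviour of Dirichlet-type series attached to the orbit, the essential tool being the spectral theory of the Laplacian $\Delta$ on $\GmodH$.

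For the denominator, writing $\g=\mattwo{a}{b}{c}{d}$ one has $\Im(\g z)=\Im(z)/|cz+d|^{2}$, and the pairs $(c,d)$ run over $\GinfmodG$ without repetition, so $N(\delta)$ counts points of a fixed discrete set lying in a disc of radius $(\Im(z)/\delta)^{1/2}$; a lattice-point estimate gives $N(\delta)\sim c(z)/\delta$. Equivalently, $N(\delta)=\frac{1}{2\pi i}\int_{(\sigma)}E(z,s)\,\delta^{-s}\,\frac{ds}{s}$ for $\sigma>1$, where $E(z,s)=\sum_{\g\in\GinfmodG}\Im(\g z)^{s}$ is the Eisenstein series attached to the cusp $\infty$; shifting the contour past the simple pole of $E(z,s)$ at $s=1$, whose residue equals $1/\vol(\GmodH)$, gives $N(\delta)\sim\delta^{-1}/\vol(\GmodH)$.

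For the numerator I would introduce the Poincaré series
$$
U_m(z,s):=\sum_{\g\in\GinfmodG}\Im(\g z)^{s}\,e\!\left(m\Re(\g z)\right),
$$
absolutely convergent for $\Re(s)>1$, so that Perron's formula (after a harmless smoothing of the sharp cutoff) yields $S_m(\delta)=\frac{1}{2\pi i}\int_{(\sigma)}U_m(z,s)\,\delta^{-s}\,\frac{ds}{s}$ for $\sigma>1$. The crux is the meromorphic continuation of $U_m(z,\cdot)$ to the half-plane $\Re(s)\ge1/2$: through the resolvent of $\Delta$ on $\rum{\GmodH}$, equivalently the spectral expansion into Maass cusp forms and Eisenstein series, $U_m(z,\cdot)$ continues with at most finitely many poles, all simple and located at exceptional parameters $s_j\in(1/2,1)$ with $\lambda_j=s_j(1-s_j)$ a small eigenvalue, together with polynomial growth on vertical lines. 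Crucially there is \emph{no} pole at $s=1$: unfolding against the constant function,
$$
\int_{\GmodH}U_m(z,s)\,\frac{dx\,dy}{y^{2}}=\int_{0}^{\infty}\!\!\int_{0}^{1}y^{s}e(mx)\,\frac{dx\,dy}{y^{2}}=0
$$
for $m\ne0$, so $U_m$ is orthogonal to the constants and the residue responsible for the main term of $N(\delta)$ does not occur.

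It then remains to shift the contour in the formula for $S_m(\delta)$ from $\Re(s)=\sigma$ down to $\Re(s)=1/2+\epsilon$: one crosses at most the poles $s_j$, contributing $O(\delta^{-s_j})$, while the shifted integral is $O(\delta^{-1/2-\epsilon})$ by the polynomial growth, so $S_m(\delta)=O_z(\delta^{-\theta})$ with $\theta=\max(1/2,\sup_js_j)<1$ (and $\theta=1/2$ when $\G$ has no exceptional eigenvalues, e.g.\ for $\G=\sl$ by Selberg's bound). Since $N(\delta)\asymp\delta^{-1}$, this gives $S_m(\delta)/N(\delta)=O(\delta^{1-\theta})\to0$, and Weyl's criterion completes the proof. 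I expect the one genuinely hard step to be the meromorphic continuation and growth control of $U_m(z,s)$ — the spectral-theoretic analysis on the modular surface — which is precisely the point where Good's harmonic analysis is needed; the rest is routine manipulation of contour integrals together with a lattice count.
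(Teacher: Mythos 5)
Your proposal follows essentially the same route as the paper: Weyl's criterion reduces the theorem to bounding the exponential sums over $\{\g:\Im\g z>\varepsilon\}$, and these are controlled through the Dirichlet series $\sum_{\g\in\GinfmodG}\Im(\g z)^s e(m\Re\g z)$ --- which the paper calls $V_m(z,s)$, reserving $U_m$ for Selberg's Poincar\'e series with $e(m\g z)$ --- whose meromorphic continuation past $\Re(s)=1$ comes from the resolvent of $\Delta$, with regularity at $s=1$ for $m\ne0$ and a Tauberian/contour-shift step to conclude. The one technical point worth flagging: the naive unfolding $\int_{\GmodH}V_m(z,s)\,d\mu$ does not literally converge, since $V_m(z,s)-y^se(mx)$, not $V_m$ itself, is in $L^2$; the paper first subtracts the cutoff $h(y)y^se(mx)$, inverts with the resolvent, and only then computes the $s=1$ residue by unfolding the \emph{corrected} series against constants, where $\int_0^1 e(mx)\,dx=0$ kills it (and the residue for $m=0$ picks up a factor $t_\G$ depending on whether $-I\in\G$, which your count $N(\delta)\sim\delta^{-1}/\vol(\GmodH)$ omits). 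These are refinements, not gaps: your architecture matches the paper's.
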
 
More precisely, let 
$$
(\GinfmodG)_{\varepsilon,z}=\{\g\in\GinfmodG : \Im{\g z}>\varepsilon\}\;.
$$ 
Then for every continuous function $f\in C(\R\slash \Z)$, 
as $\varepsilon\to 0$,  
\begin{equation*}
\frac 1{ \#(\GinfmodG)_{\varepsilon,z}}
    \sum_{\g\in(\GinfmodG)_{\varepsilon,z}}f(\Re{\g z})
\to\int_{\R\slash\Z}f(t)dt \;. 
\end{equation*}

Though the writing in \cite{Good:1983a} is not easy to penetrate, the
results deserve to be more widely known. We sketch a
proof of Theorem~\ref{equidistribution} in
appendix~\ref{sec:spectral}, assuming familiarity with 
standard methods of the spectral theory of automorphic forms. 

\noindent{\bf Acknowledgements:} We thank Peter Sarnak for his comments
on an earlier version and for alerting us to Good's work.

\section{A geometric argument}\label{sec:Geom}  
\subsection{} 
We start with a basis $\{v,v'\}$ for the lattice $L$ 
which is oriented positively, that is
$\Im(v'/v)>0$. For a given $v$,  $v'$ is unique up to addition of an integer
multiple of $v$. Consider the parallelogram $P(v,v')$ spanned by $v$ and
$v'$. Since $\{v,v'\}$ form a basis of the lattice $L$, $P(v,v')$ is a
fundamental domain for the lattice and the area of $P(v,v')$ depends
only on $L$, not on $v$ and $v'$: $\area(P(v,v'))=\area(L)$.

Let $\mu(L)>0$ be the minimal length of a nonzero vector in $L$:
$$ 
\mu(L)=\min\{ |v|:0\neq v\in L\}\;.
$$

\begin{lemma}
Any minimal vector $v'$ satisfies
\begin{equation}\label{upper bd on v'}
|v'|^2\leq (\frac{|v|}2 )^2 + (\frac {\area(L)} {|v|})^2 \;.
\end{equation}
Moreover, if $|v|>2\area(L)/\mu(L)$ then the minimal
vector $v'$ is unique up to sign. 
\end{lemma}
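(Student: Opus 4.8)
The plan is to argue entirely with the parallelogram $P(v,v')$. Since $\{v,v'\}$ is a basis of $L$, the parallelogram $P(v,v')$ is a fundamental domain, so its area equals $\area(L)$; writing the area as base times height with the side $v$ as base, the perpendicular distance from the line $\R v$ through the origin to the tip of $v'$ equals $h:=\area(L)/|v|$. Every completion of $v$ to a positively oriented basis is of the form $v'+nv$ with $n\in\Z$, and every completion whatsoever is of the form $\pm v'+nv$; hence the tips of the admissible vectors $v'$ lie on the two lines $\pm\ell$, where $\ell$ is the line parallel to $v$ at distance $h$ from the origin, and on each of these lines they form an arithmetic progression of step $|v|$. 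By the reflection $w\mapsto -w$ it suffices to analyse the progression on $\ell$.

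For the bound \eqref{upper bd on v'}, decompose any candidate $v'$ into its component along $v$ and its component orthogonal to $v$. The orthogonal component has length $h=\area(L)/|v|$, independent of the choice, while after subtracting a suitable multiple of $v$ we may take the component along $v$ to have length at most $|v|/2$, since some point of the progression on $\ell$ lies within $|v|/2$ of the foot of the perpendicular from the origin to $\ell$. For that choice the Pythagorean theorem gives $|v'|^2\le(|v|/2)^2+(\area(L)/|v|)^2$, and a genuine minimiser only does better; this is \eqref{upper bd on v'}. (Dividing by $|v|^2$ and expanding the square root recovers the estimate $|v'|/|v|\le\frac12+O(|v|^{-4})$ quoted in Section~\ref{sec:statements}.)

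For the uniqueness statement, suppose the minimal length on $\ell$ were attained at two distinct points $w$ and $w'=w+nv$ with $n\neq 0$. The function $t\mapsto|w+tv|^2$ is a quadratic with positive leading coefficient $|v|^2$, hence strictly convex, so it cannot attain its minimum over $\Z$ at two points more than one step apart; thus $|n|=1$, and after relabelling $w'=w+v$ with $|w|^2=|w+v|^2$. Expanding, $2\inprod{w}{v}+|v|^2=0$, which says the component of $w$ along $v$ is exactly $-v/2$. Consequently $w+w'=2w+v$ is orthogonal to $v$, has length $2h=2\area(L)/|v|$, lies in $L$, and is nonzero because $w$ (being part of a basis together with $v$) is not parallel to $v$. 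Therefore $\mu(L)\le 2\area(L)/|v|$, i.e.\ $|v|\le 2\area(L)/\mu(L)$. Contrapositively, if $|v|>2\area(L)/\mu(L)$ the minimiser on $\ell$ is unique, and invoking the $w\mapsto -w$ symmetry again, the minimal vector $v'$ is unique up to sign.

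I do not expect a serious obstacle: the whole argument is elementary plane geometry. The one step that wants a little care is the passage from ``two minimisers'' to ``two adjacent minimisers'', which is exactly the strict convexity input above, together with the bookkeeping that $2w+v$ simultaneously lies in $L$ and is short enough to violate the definition of $\mu(L)$.
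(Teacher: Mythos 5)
Your proof is correct and follows essentially the same approach as the paper: decompose $v'$ into a height component (perpendicular to $v$, of length $\area(L)/|v|$) and a component along $v$ that can be brought into $[-|v|/2,\,|v|/2]$ by subtracting integer multiples of $v$, then invoke Pythagoras; and for uniqueness, observe that two tied minimisers force the vector $2v'-v=2h$ (your $w+w'$) to be a nonzero lattice vector of length $2\area(L)/|v|$, contradicting $\mu(L)$. The only cosmetic difference is that you derive ``the two minimisers are adjacent'' via strict convexity of $t\mapsto|w+tv|^2$, whereas the paper gets it for free by parametrising $v'=h+tv$ with $|t|\le 1/2$ and noting $t$ is determined unless $t=\pm 1/2$.
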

\begin{proof}
To see \eqref{upper bd on v'}, note that the height of the
parallelogram $P$ spanned by $v$ and $v'$ is $\area(P)/|v| =
\area(L)/|v|$. If $h$ is the 
height vector, then the vector $v'$ 
thus lies on the affine line $h+\R v$ so is of the form 
$h+tv$. After adding an integer multiple of $v$ we may assume
that $|t|\leq 1/2$, a choice that minimizes $|v'|$, and then 
$$ |v'|^2 = t^2|v|^2+ |h|^2\leq \frac 14 |v|^2 +
(\frac{\area(L)}{|v|})^2  \;.
$$

We now show  that for $|v|\gg_L 1$, the minimal choice of $v'$ is unique if
we assume $\Im(v'/v)>0$, and up to sign otherwise: 
Indeed, writing the minimal $v'$ as above in the form 
$v'=h+tv$ with $|t|\leq 1/2$,  the choice of $t$ is unique unless we
can take $t=1/2$, in which case we have the two choices $v'=h\pm
v/2$. To see that $t=\pm 1/2$ cannot occur for $|v|$ sufficiently
large, we argue that if $v'=h+v/2$ then we must have $2 h=2v'-v\in
L$. 
The length of the nonzero vector $2h$ must then be
at least $\mu(L)$. Since $|h|=\area(L)/|v|$ this gives
$2\area(L)/|v|\geq \mu(L)$, that is 
$$|v|\leq \frac{2\area(L)}{\mu(L)}$$
Hence $v'$ is uniquely determined if $|v|>2\area(L)/\mu(L)$. 
\end{proof}

\subsection{}

Let $\alpha=\alpha_{v,v'}$ be the angle between $v$
and $v'$,  which takes values between $0$ and $\pi$ since
$\Im(v'/v)>0$. As is easily seen, for any choice of $v'$,
$\sin\alpha_{v,v'}$  shrinks  
as we increase $|v|$, in fact we have: 
\begin{lemma}\label{lem:angle}
For any choice of $v'$ we have 
\begin{equation}\label{upper bd on alpha} 
\sin \alpha \leq \frac{\area(L)}{\mu(L)}\frac 1{|v|}  \;.
\end{equation}
\end{lemma}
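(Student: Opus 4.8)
The plan is to express $\sin\alpha$ directly in terms of the area of the parallelogram $P(v,v')$ and the lengths $|v|$, $|v'|$, and then bound the quantity that appears. Recall that for two vectors $v,v'$ in the plane the area of the parallelogram they span is $\area(P(v,v')) = |v|\,|v'|\,\sin\alpha$, where $\alpha\in(0,\pi)$ is the angle between them (this is simply $|\Im(\bar v v')|$, and is positive here because $\Im(v'/v)>0$). Since $\{v,v'\}$ is a basis of $L$, the left-hand side equals $\area(L)$, independent of the choice of $v'$. Hence
\begin{equation*}
\sin\alpha = \frac{\area(L)}{|v|\,|v'|}\;.
\end{equation*}

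Now I would bound $|v'|$ from below. Since $v'$ is a nonzero vector of the lattice $L$, we have $|v'|\geq \mu(L)$ by the very definition of $\mu(L)$. Substituting this into the displayed identity gives
\begin{equation*}
\sin\alpha = \frac{\area(L)}{|v|\,|v'|} \leq \frac{\area(L)}{\mu(L)}\cdot\frac{1}{|v|}\;,
\end{equation*}
which is exactly \eqref{upper bd on alpha}. Note that this argument works for \emph{any} completion $v'$ of $v$ to a basis, not merely a minimal one, since the area identity holds for every basis and the lower bound $|v'|\geq\mu(L)$ holds for every nonzero lattice vector; this matches the phrase ``for any choice of $v'$'' in the statement.

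There is essentially no obstacle here: the only point requiring a moment's care is the orientation convention, namely that $\Im(v'/v)>0$ forces $\alpha\in(0,\pi)$ so that $\sin\alpha>0$ and the unsigned area equals $|v|\,|v'|\sin\alpha$ without an absolute value; this is already built into the setup of the preceding subsection. The estimate is also visibly consistent with the bound \eqref{upper bd on v'} on $|v'|$, but one does not need \eqref{upper bd on v'} for this proof — only the trivial lower bound $|v'|\geq\mu(L)$.
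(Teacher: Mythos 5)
Your proof is correct and follows essentially the same route as the paper: express $\sin\alpha$ via the area identity $\area(L)=|v|\,|v'|\sin\alpha$ and then apply the trivial lower bound $|v'|\geq\mu(L)$. The extra remarks on orientation and on why the bound holds for any completion $v'$ are accurate but not materially different from the paper's argument.
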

\begin{proof}
To see \eqref{upper bd on alpha}, note that the area of the
fundamental parallelogram $P(v,v')$ is  given in terms 
of $\alpha$ and the side lengths by 
$$  \area(P) =|v| |v'|\sin \alpha  $$
and since $v'$ is a non-zero vector of $L$, we necessarily have
$|v'|\geq \mu(L)$ 
and hence, since $\area(P)=\area(L)$ is independent of $v$, 
$$
0<\sin \alpha \leq \frac{\area(L)}{\mu(L)|v|} 
$$
as claimed. 
\end{proof}
Note that if we take for $v'$ with minimal length, then we have a lower
bound $\sin\alpha \geq 2\area(L)/|v|^2 +O( 1/|v|^6)$
obtained by inserting \eqref{upper bd on v'} into the
area formula $\area(L)=|v||v'|\sin\alpha$.

\subsection{} 
Given a positive basis $\{v,v' \}$, we define a measure of 
skewness of the fundamental parallelogram as follows: 
Let $\Pi_v(v')$  be the orthogonal projection of the vector $v'$ to
the line through  $v$. It is a scalar multiple of $v$: 
$$\Pi_v(v')= \ecc(v,v') v$$
where the multiplier $\ecc(v,v')$, which we call the {\em
skewness} of the parallelogram, is given in terms of the inner
product between $v$ and $v'$  as 
\begin{equation}\label{exp ecc}
\ecc(v,v') =  \frac{\langle v',v\rangle}{|v|^2}  \;.
\end{equation}
Thus we see that the  skewness is the real part of the ratio 
$v'/v$: 
\begin{equation*}
\ecc(v,v') = \Re(v'/v) \;.
\end{equation*}

If we replace $v'$ by adding to it an integer multiple of $v$, then
$\ecc(v,v')$  changes by 
$$\ecc(v,v'+nv) = \ecc(v,v') + n   \;.
$$
In particular, since $v'$ is unique up to addition of an integer
multiple of $v$, looking at the fractional part, that is in $\R/\Z$,
we get a quantity $\ecc (v)\in (-1/2,1/2]$ depending only on $v$: 
$$
   \ecc(v) : =\ecc(v,v') \mod 1 \;.
$$
This is the least skewness of a fundamental domain for the lattice 
constructed from the primitive vector $v$. 

\begin{lemma}
  The signed ratio $\rho(v) = \pm |v'|/|v|$ and the least skewness 
  $\ecc(v)$ are asymptotically equivalent: $$\rho(v) =
  \ecc(v)\left(1+O(\frac 1{|v|^2})\right) \;.
$$
\end{lemma}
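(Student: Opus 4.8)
The plan is to compare the signed ratio $\rho(v) = \pm|v'|/|v|$ directly against the skewness $\ecc(v) = \Re(v'/v)$ by writing everything in terms of the single complex number $w := v'/v$, where $v'$ is chosen to be of minimal length (and positively oriented, so $\Im w > 0$). With this notation the least skewness is $\ecc(v) = \Re(w)$ reduced modulo one into $(-1/2,1/2]$, while $|v'|/|v| = |w|$, and the sign attached to $\rho(v)$ is precisely the sign of $\Re(w)$ (acute angle $\iff \langle v',v\rangle > 0 \iff \Re(w) > 0$). So the claim $\rho(v) = \ecc(v)(1 + O(1/|v|^2))$ amounts to showing that for the minimal $v'$ one has $|w| = |\Re(w)|\,(1 + O(1/|v|^2))$, i.e. that $w$ lies very close to the real axis.

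The key input is the area formula together with the bounds already established. First, by Lemma~\ref{lem:angle} (applied to the minimal $v'$, which is an admissible choice), $\sin\alpha \leq \area(L)/(\mu(L)|v|)$, so $\Im(w) = |w|\sin\alpha$, and since also $|w| = |v'|/|v| \leq 1/2 + O(1/|v|^4)$ by \eqref{upper bd on v'}, we get $\Im(w) = O(1/|v|)$. Next, $|w|^2 = \Re(w)^2 + \Im(w)^2$, so I need a lower bound on $|\Re(w)|$ to control the relative error. Here I use that for the minimal $v'$ the length is not too small: writing $v' = h + tv$ with $|t| \leq 1/2$ as in the first lemma, one has $\Re(w) = \Re(v'/v) = t$ (since $h \perp v$ forces $\Re(h/v) = 0$), so $|\Re(w)| = |t|$, and $|w|^2 = t^2 + |h|^2/|v|^2 = t^2 + (\area(L)/|v|^2)^2$. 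Therefore
\begin{equation*}
|w|^2 = \Re(w)^2 + O\!\left(\frac{1}{|v|^4}\right),
\qquad
\Im(w)^2 = O\!\left(\frac{1}{|v|^4}\right)
\end{equation*}
once $|v|$ is large enough that $t$ is bounded away from $0$ — and this is exactly the point where care is needed.

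The main obstacle is precisely the possibility that $\Re(w) = t$ is itself small or zero, in which case the multiplicative statement $\rho(v) = \ecc(v)(1 + O(1/|v|^2))$ must be interpreted correctly: when $\ecc(v) = 0$ we should have $\rho(v) = 0$ as well. But this is forced by the structure: $\Re(w) = t = 0$ would mean $v' = h$ is orthogonal to $v$, hence $\langle v',v\rangle = 0$, hence $\rho(v) = 0$ by its very definition (the sign convention), and indeed $\ecc(v) = 0$; conversely if $t \neq 0$ then $t^2 \geq$ some positive quantity and the displayed estimates give $|w| = |t|\sqrt{1 + (\area(L)/(t|v|^2))^2} = |t|(1 + O(1/(t^2|v|^4)))$. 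Now comes the subtlety: $t$ can be as small as $\gtrsim 1/|v|^2$ (by the lower bound on $\sin\alpha$ noted after Lemma~\ref{lem:angle}, combined with $|w| \asymp |t| + O(1/|v|^2)$... ) — wait, more carefully, from $|w| = \area(L)/(|v|^2 \sin\alpha)$ and $\sin\alpha \leq \area(L)/(\mu(L)|v|)$ we get $|w| \geq \mu(L)/|v|$, so $t^2 = |w|^2 - (\area(L)/|v|^2)^2 \geq \mu(L)^2/|v|^2 - O(1/|v|^4)$, giving $|t| \gg 1/|v|$. Hence $1/(t^2|v|^4) = O(1/|v|^2)$, and attaching the common sign $\operatorname{sgn}(t)$ to both sides yields $\rho(v) = t(1 + O(1/|v|^2)) = \ecc(v)(1 + O(1/|v|^2))$, as claimed. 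I would organize the write-up by first recording $\Re(v'/v) = t$ and $|v'/v|^2 = t^2 + (\area(L)/|v|^2)^2$, then the lower bound $|t| \gg_L 1/|v|$, and finally the one-line expansion of the square root.
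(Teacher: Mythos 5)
Your proof is correct, and the final answer matches the paper's, but you take a somewhat more computational route and rely on one ingredient the paper's own argument avoids. The paper observes that $\ecc(v,v') = \frac{|v'|}{|v|}\cos\alpha$, so that the ratio $\ecc(v,v')/\rho(v)$ is exactly $\abs{\cos\alpha}$; then $\sin\alpha = O(1/|v|)$ from Lemma~\ref{lem:angle} gives $\abs{\cos\alpha} = 1 + O(1/|v|^2)$ at once, and the multiplicative statement $\rho(v) = \ecc(v)\bigl(1+O(1/|v|^2)\bigr)$ follows with no control on the size of $\ecc(v)$ needed, since the relative error is captured entirely by $\abs{\cos\alpha}$. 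You instead expand $|w|^2 = t^2 + (\area(L)/|v|^2)^2$ with $t=\Re(w)$ and want to extract $|w| = |t|(1+O(1/|v|^2))$ from this; as you correctly notice, that requires a lower bound on $|t|$, which you then supply via $|w| \geq \mu(L)/|v|$ (that is, $|v'|\geq\mu(L)$), giving $|t| \gg 1/|v|$. That lower bound is true and your derivation of it is fine, and it has the side benefit of showing $\ecc(v)$ cannot be too small for large $|v|$ — but it is an extra step the paper sidesteps by working with $\cos\alpha$ rather than with $|w|$ and $\Re(w)$ separately. Your treatment of the sign, via $\operatorname{sgn}(\Re(w)) = \operatorname{sgn}(\langle v',v\rangle)$, is the same as the paper's identification of $\operatorname{sgn}(\cos\alpha)$ with the sign in $\rho(v)$.
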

\begin{proof}
In terms of the angle $0<\alpha<\pi$ between the vectors $v$ and $v'$,
we have  
\begin{equation*}\label{relation between ecc and alpha} 
\ecc(v,v') = \frac{|v'|}{|v|}\cos\alpha  \;.
\end{equation*} 
Our claim follows from this and 
the fact  $\cos\alpha = \pm 1+O(1/|v|^2)$,  
which follows from the upper bound \eqref{upper bd on alpha} of
Lemma~\ref{lem:angle}.   
\end{proof}

Thus the sequences $\{\rho(v)\}$, $\{\ecc(v)\}$ are asymptotically
identical, hence uniform distribution of one implies that of the other. 
To prove Theorem~\ref{unif dist of rho}  it suffices to show 
\begin{theorem}\label{unif dist of ecc}
As $v$ ranges over all primitive vectors in the
lattice $L$, the least skewness $\ecc(v)$ become uniformly
distributed modulo one.  
\end{theorem}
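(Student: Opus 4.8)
The plan is to translate the problem about primitive lattice vectors and the skewness $\ecc(v)=\Re(v'/v)\bmod 1$ into the language of Good's Theorem~\ref{equidistribution} for the modular group $\G=\sl$. First I would reduce to the standard lattice $L=\Z[\sqrt{-1}]$: for a general lattice $L=\omega\Z[\sqrt{-1}]$ (or more precisely $L=\omega_1\Z+\omega_2\Z$) the ratio $v'/v$ is invariant under rescaling $L$, and a linear change of basis on $L$ amounts to replacing $\G$ by a conjugate; since the skewness only depends on the ratio $v'/v\in\H$, and Good's theorem holds for any cofinite non-cocompact Fuchsian group with a cusp at $\infty$ normalized as in the excerpt, it suffices to treat $L=\Z[\sqrt{-1}]$ with $\G=\sl$.

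Next, the key bijection. A primitive vector $v=(a,b)\in\Z[\sqrt{-1}]$ together with a positively oriented completion $v'=(x,y)$ satisfying $ay-bx=1$ is exactly the data of a matrix $\mattwo{a}{x}{b}{y}\in\sl$ — here I'd identify $v=a+bi$, $v'=x+yi$. Two completions $v'$, $v'+nv$ of the same $v$ differ by left multiplication by $\mattwo{1}{n}{0}{1}$, i.e. by an element of $\G_{\!\infty}$ acting on the right column; so primitive vectors $v$ correspond to cosets in $\GinfmodG$ (acting on the bottom row). Now the crucial computation: for $\g=\mattwo{a}{x}{b}{y}$ one has $\g(i)=\frac{ai+x}{bi+y}=\frac{v'\overline v}{|v|^2}$ wait — I should check signs — $\g(i)=\frac{(ai+x)(\overline{bi+y})}{|bi+y|^2}$, and $|bi+y|^2=|v|^2$ taking $v=y+bi$... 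Let me instead record the outcome cleanly: one gets $\Re(\g(i))=\ecc(v,v')=\Re(v'/v)$ and $\Im(\g(i))=1/|v|^2$ (up to fixing the precise dictionary between $(a,b)$, $(x,y)$ and the matrix entries, which is a routine but sign-sensitive check). Thus $\ecc(v)=\Re(\g(i))\bmod 1$, and the condition $|v|\le T$ becomes $\Im(\g(i))\ge 1/T^2=:\varepsilon$.

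With this dictionary, the set $L_{prim}(T)$ corresponds precisely to $(\GinfmodG)_{\varepsilon,i}$ with $\varepsilon=1/T^2$, and the distribution of $\{\ecc(v):v\in L_{prim}(T)\}$ in $\R/\Z$ is literally the distribution of $\{\Re(\g i):\g\in(\GinfmodG)_{\varepsilon,i}\}$. Applying Theorem~\ref{equidistribution} with $z=i$ and letting $T\to\infty$ (so $\varepsilon\to0$) gives equidistribution of $\ecc(v)$ modulo one, which is the assertion; testing against $f\in C(\R/\Z)$ and approximating indicator functions of subintervals gives the explicit counting statement. The main obstacle is purely bookkeeping: pinning down the exact correspondence between the ordered basis $\{v,v'\}$, the sign condition $ay-bx=+1$, the orientation $\Im(v'/v)>0$, and the matrix entries so that the acute/obtuse sign in $\rho$ and the reduction $\ecc(v)\in(-1/2,1/2]$ match the $\Re\bmod 1$ coming out of Good's theorem — and checking that the correspondence $v\leftrightarrow\G_{\!\infty}\g$ is genuinely a bijection (surjectivity uses that $\sl$ acts transitively on primitive vectors, injectivity uses that $\G_{\!\infty}$ is exactly the stabilizer of the bottom row). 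Once the dictionary is fixed there is nothing left to prove.
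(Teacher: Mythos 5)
Your core strategy --- mapping an oriented basis $\{v,v'\}$ to a matrix $\g\in\sl$ up to left multiplication by $\G_{\!\infty}$, computing $\Re(\g z)=\ecc(v,v')$ and $\Im(\g z)=\area(L)/|v|^2$, and applying Theorem~\ref{equidistribution} --- is precisely what the paper does. However, your opening reduction to $L=\Z[\sqrt{-1}]$ is a genuine gap. A general lattice is not a complex-scalar multiple of $\Z[\sqrt{-1}]$, and complex scaling is the \emph{only} normalization available here, because $\ecc(v,v')=\Re(v'/v)$ is defined through the complex structure; an $\R$-linear change of coordinates carrying $L$ onto $\Z[\sqrt{-1}]$ does not preserve the ratio $v'/v$. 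Conjugating $\G$ in $\slr$ to move a general base point to $i$ also fails: the conjugate group need not have $\infty$ as its normalized cusp, and $\Re(\cdot)\bmod 1$ --- the very quantity Good's theorem equidistributes --- is not invariant under such a conjugation.

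The fix is that no such reduction is needed: Good's theorem holds for an arbitrary $z\in\H$. After scaling, write $L=\Z+z\Z$ with $z\in\H$, a primitive vector as $v=cz+d$ with $(c,d)$ coprime, a positively oriented completion as $v'=az+b$, and set $\g=\mattwo{a}{b}{c}{d}\in\sl$. Then $\g z=v'/v$, so $\Re(\g z)=\ecc(v,v')$ and $\Im(\g z)=\Im(z)/|cz+d|^2=\area(L)/|v|^2$; changing $v'$ by $nv$ is left multiplication of $\g$ by $\mattwo{1}{n}{0}{1}$, so $v\leftrightarrow\G_{\!\infty}\g$ is the desired bijection, and Theorem~\ref{equidistribution} applied at this $z$ finishes the proof. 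This choice of coordinates --- $v$ as the bottom row acting through $cz+d$ --- also dissolves the sign and bookkeeping issues you flagged: your $\g=\mattwo{a}{x}{b}{y}$ with $v=a+bi$, $v'=x+yi$ gives $\g(i)=(ai+x)/(bi+y)$, which is not $v'/v=(x+yi)/(a+bi)$ in general.
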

This result, for the standard lattice $\Z[\sqrt{-1}]$,  
was highlighted by Good in the introduction to \cite{Good:1983a}. 
Below we review the reduction of Theorem~\ref{unif dist of ecc} 
to Theorem~\ref{equidistribution}. 

\subsection{Proof of Theorem~\ref{unif dist of ecc}} 
Our problems only depend on the lattice $L$ up to scaling. So we may
assume that $L$ has a basis $L=\{1,z\}$ with $z=x+iy$ in the upper
half-plane. The area of a fundamental domain for $L$ is
$\area(L)=\Im(z)$. 
Any primitive vector has the form $v=cz+d$ with the
integers $(c,d)$ co-prime.

Now given the positive lattice basis $v=cz+d$ and $v'=az+b$, form the
integer matrix  
$\g=\begin{pmatrix} a&b\\ c&d\end{pmatrix}$  , which has $\det(\g)=+1$
since $\{v,v'\}$ form a positive basis of the lattice. Thus we get a
matrix in the modular group $\G=SL_2(\Z)$. 
Then with $\g$ applied as a M\"{o}bius transformation to $z$, 
the length of $v$ can be computed via 
\begin{equation} \Im(\g z) = \frac{\Im(z)}{|cz+d|^2}=\frac{\area(L)}{|v|^2}
\end{equation}
The signed ratio between the lengths of $v$ and $v'$ (when $v'$ is chosen of minimal length) is 
\begin{equation}\rho{(v)} = \pm |\gamma z| \;.\end{equation}
 where the sign is $+$ if $\Re(\g z)>0$ and $-$ otherwise.
Moreover, we have 
 $$\ecc(v,v') = \Re(\g z)  $$
Indeed, 
$$
\Re (\g z) = \frac{ac(x^2+y^2) +(ad+bc)x +bd}{|cz+d|^2}
$$ 
which is $\ecc(v,v')$ in view of \eqref{exp ecc}. 
Consequently, the uniform distribution modulo one of $\ecc(v)$ as
$|v|\to\infty$ is then exactly the
uniform distribution modulo one of $\Re(\g z)$ as $\g$ varies over
$\GinfmodG$ with $\Im(\g z )\to 0$, that is 
Theorem~\ref{equidistribution}.  
\qed

\appendix
\section{A sketch of a proof of Good's theorem}\label{sec:spectral}

To prove Theorem~\ref{equidistribution}, we use Weyl's criterion to
reduce it to showing that the corresponding ``Weyl sums'' satisfy
\begin{equation}\label{character asymptotics} 
  \sum_{\g \in(\GinfmodG)_{\varepsilon,z}}e(m\Re{\g
z})=\delta_{m=0}\frac{t_\G}{\vol{(\GmodH)}}\frac 1\varepsilon 
+o(1/\varepsilon)
\end{equation} 
as $\varepsilon\to 0$. 
Here $t_\G$ equals $2$ if $-I\in \G$ and $1$ otherwise. 
In turn, \eqref{character asymptotics} will follow, by a more or less
standard Tauberian theorem  
(see e.g. \cite[p. 1035-1038]{PetridisRisager:2004a}) from knowing the
analytic properties  of the series
\begin{equation*}
  V_m(z,s):=\sum_{\g\in\GinfmodG}\Im(\g z)^se(m\Re(\g z)) \;.
\end{equation*}
studied also in \cite{Good:1981b,Neunhoffer:1973a}
Here $e(x)=\exp(2\pi i x)$. The series is absolutely convergent for
$\Re(s)>1$, as is seen by comparison with  the
standard non-holomorphic Eisenstein series $V_0(z,s)=E(z,s)$ of weight
$0$ (See \cite{Selberg:1989a}). 
For general $m$ the series is closely related
to the Poincar\'e series 
$$U_m(z,s) = \sum_{\g\in\GinfmodG}\Im(\g z)^s e(m\g z)$$ 
studied by Selberg \cite{Selberg:1965a}. For a different
application of the series $V_m(z,s)$, see \cite{Sarnak:2001a}. 



The analytic properties from which we can conclude
Theorem~\ref{equidistribution} are given by  
\begin{proposition}\label{continuation} 
The series $V_m(z,s)$ admits
meromorphic continuation to $\Re(s)>1/2.$ If poles exist they are real
and simple. If $m\neq 0$ then $V_m(z,s)$ is regular at $s=1$. If $m=0$ the
point $s=1$ is a pole with residue $t_\G/\vol{(\GmodH)}$. 
Moreover,  $V_m(z,s)$ has polynomial growth on vertical strips in
$\Re(s)>1/2$.  
\end{proposition}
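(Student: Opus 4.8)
The plan is to establish the meromorphic continuation of $V_m(z,s)$ via the spectral theory of automorphic forms, treating the $m=0$ case (the classical Eisenstein series) separately from the $m\neq 0$ case. First I would handle $V_0(z,s) = E(z,s)$, where the stated properties — meromorphic continuation to $\Re(s) > 1/2$, the simple pole at $s=1$ with residue $t_\G/\vol(\GmodH)$, and polynomial growth on vertical strips — are entirely classical (see e.g. Selberg's work referenced in the excerpt as \cite{Selberg:1989a}); I would simply quote these. So the substantive content concerns $m \neq 0$.

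For $m \neq 0$, the function $e(m\Re(\g z))$ is not $\G$-invariant, but $V_m(z,s)$ as a sum over $\GinfmodG$ is a well-defined function on $\GmodH$ for $\Re(s) > 1$. I would obtain the continuation by applying the resolvent $(\Delta + s(1-s))^{-1}$ to an explicit non-automorphic function: observe that $\Im(\g z)^s e(m\Re(\g z))$ is, up to the automorphy summation, a translate of $y^s e(mx)$, and a direct computation shows $\Delta(y^s e(mx)) = s(1-s) y^s e(mx) + (2\pi m)^2 y^{s+2} e(mx)$. Hence $V_m(z,s)$ satisfies an inhomogeneous equation $(\Delta + s(1-s))V_m(z,s) = -(2\pi m)^2 V_m(z, s+2)$, which for $\Re(s)$ large expresses $V_m$ in terms of $V_m(\cdot, s+2)$ acted on by the resolvent. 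Iterating, or better, using the incomplete Eisenstein series / unfolding against the resolvent kernel (the automorphic Green's function), one continues $V_m(z,s)$ to $\Re(s) > 1/2$. The poles in this region come only from the poles of the resolvent $(\Delta + s(1-s))^{-1}$, i.e. from the discrete spectrum of $\Delta$ on $\rum{\GmodH}$ (eigenvalues $\lambda_j = s_j(1-s_j)$ with $s_j \in (1/2, 1]$) and possibly the residual point $s=1$; these are real and simple. To see $V_m(z,s)$ is regular at $s=1$ for $m \neq 0$, note the residue of the resolvent at $s=1$ is a projection onto the constants, and pairing the source term $V_m(\cdot, s+2)$ against the constant function unfolds to $\int_{\GinfmodG \backslash \H} y^{s+2} e(mx)\, d\mu = \int_0^\infty \int_0^1 y^{s+2} e(mx)\, \frac{dx\,dy}{y^2}$, whose $x$-integral vanishes since $m \neq 0$.

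The polynomial growth on vertical strips in $\Re(s) > 1/2$ would follow from the corresponding bounds for the resolvent kernel together with the convexity/standard bounds for the Eisenstein series and the polynomial growth of the automorphic Green's function in the spectral parameter, combined with control of the source term $V_m(\cdot, s+2)$, which is bounded in the region of absolute convergence.

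I expect the main obstacle to be making the continuation argument genuinely rigorous rather than formal: the equation $(\Delta + s(1-s)) V_m(z,s) = -(2\pi m)^2 V_m(z,s+2)$ must be interpreted carefully because $V_m(\cdot, s+2)$ is not square-integrable on $\GmodH$ (it grows in the cusp like $y^{s+2}$), so one cannot naively apply the $\rum{\GmodH}$ resolvent. The standard remedy is to subtract off the cuspidal contribution — write $y^s e(mx) = \psi(y) y^s e(mx) + (1-\psi(y)) y^s e(mx)$ with a cutoff $\psi$ supported high in the cusp, so that the automorphized version of the compactly-supported (in the cusp) piece lies in $\rum{\GmodH}$ and the resolvent applies cleanly, while the remaining piece is handled by hand as an explicit incomplete Eisenstein-type integral whose continuation and growth are transparent. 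Sorting out this decomposition, and checking that the meromorphic continuations of the two pieces match up and that no spurious poles are introduced on $\Re(s) = 1/2$, is where the real care is needed; everything else is bookkeeping with known properties of $E(z,s)$ and the resolvent.
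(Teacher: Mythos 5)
Your approach is essentially the same as the paper's: quote the classical theory for $V_0 = E(z,s)$, and for $m\neq 0$ use the recursion $(\Delta - s(1-s))V_m(z,s) = (2\pi m)^2 V_m(z,s+2)$, subtract off a smooth cutoff $h(y)y^s e(mx)$ supported high in the cusp so that the difference is square-integrable, and apply the resolvent $(\Delta - s(1-s))^{-1}$ to obtain the continuation, with the regularity at $s=1$ for $m\neq 0$ coming from $\int_0^1 e(mx)\,dx = 0$. This is precisely the Colin de Verdi\`ere / Goldfeld--Sarnak scheme the paper invokes.

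Two small corrections. First, you have a sign slip: from your own computation $\Delta(y^s e(mx)) = s(1-s)y^s e(mx) + (2\pi m)^2 y^{s+2}e(mx)$, the equation is $(\Delta - s(1-s))V_m(z,s) = (2\pi m)^2 V_m(z,s+2)$, not $(\Delta + s(1-s))V_m = -(2\pi m)^2 V_m(\cdot,s+2)$. Second, in the residue computation at $s=1$ you pair the projection onto constants against $V_m(\cdot,s+2)$ and unfold to $\int_0^\infty\int_0^1 y^{s+2}e(mx)\,dx\,dy/y^2$; as written this integral diverges in $y$. What one actually pairs against is the $\rum{\GmodH}$ source $(2\pi m)^2\bigl(V_m(\cdot,s+2) - h(y)y^{s+2}e(mx)\bigr) - h''(y)y^{s+2}e(mx) - 2h'(y)y^{s+1}e(mx)$; the unfolded integrals then converge, and each piece vanishes because $\int_0^1 e(mx)\,dx = 0$. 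You clearly anticipated this is where the care is needed, so this is bookkeeping rather than a gap, but the convergence must be secured before the unfolding, not after.
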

\noindent{\bf Sketch of proof.} 
  The claim about continuation of $V_0(z,s)=E(z,s)$ is well-known and
goes back to Roelcke  \cite{Roelcke:1956a} and Selberg
\cite{Selberg:1963a}. To handle also  $m\neq 0$ we may adopt the
argument of Colin de Verdi\`ere  \cite[Th\' eor\`eme
3]{Colin-de-Verdiere:1983a} and of Goldfeld and Sarnak \cite{GoldfeldSarnak:1983a} 
to get the result. This is done as
follows: Consider  the hyperbolic Laplacian 
\begin{equation*}
\Delta=-y^2\left(\frac{\partial^2 }{\partial x^2}+
\frac{\partial^2 }{\partial y^2}\right) \;.
  \end{equation*}
If we restrict $\Delta$  to smooth functions on $\GmodH$ which are compactly supported it defines an essentially self-adjoint
operator on $L^2(\GmodH,d\mu)$ where $d\mu(z)=dxdy/y^2$, with inner
product 
\newcommand{\Laplace}{\Delta} 
\begin{equation*}
  \inprod{f}{g}=\int_{\GmodH}f(z)\overline{g(z)}d\mu(z).
\end{equation*}
 We will also denote by $\Laplace$ the self-adjoint closure. 

Let $h(y)$ be a smooth function which equals 0 if $y<T$ and 1 if
$y>T+1$ where $T$ is sufficiently large. One may check that when $\Re(s)>1$
 \begin{equation*}
   V_m(z,s)-h(y)y^se(mx)
 \end{equation*}
is square integrable. This is an easy exercise using \cite[Theorem
2.1.2]{Kubota:1973a}. The series $V_m(z,s)$ satisfies 
\begin{equation}\label{straightforward}
  (\Delta-s(1-s))V_m(z,s)=(2\pi m)^2V_m(z,s+2) \textrm{ when } \Re(s)>1,
\end{equation}
since $f_s(z)=y^s e^{2\pi i m \Re z}$ satisfies this equation and
because the Laplacian commutes with isometries, so does $V_m(z,s)$, being a sum
of translates of $f_s$. 
Therefore
\begin{align}
 \nonumber (\Delta-s(1-s))&(V_m(z,s)-h(y)y^se(mx))\\ =(2\pi m)^2& (V_m(z,s+2)-h(y)y^{s+2}e(mx))\\ \nonumber &-h''(y)y^{s+2}e(mx)-2h'(y)y^{s+1}e(mx)
\end{align}
is also square integrable, since the last two terms are compactly
supported. We can therefore use the resolvent $(\Laplace-s(1-s))^{-1}$ to
invert this and find 
\begin{equation*}
V_m(z,s)- h(y)y^se(mx)=(\Laplace-s(1-s))^{-1}((2\pi m)^2V_m(z,s+2)-H(z,s))
\end{equation*} 
where \begin{equation*}
H(z,s)=(2\pi
m)^2h(y)y^{s+2}e(mx))+h''(y)y^{s+2}e(mx)+2h'(y)y^{s+1}e(mx)
\end{equation*}
This defines the meromorphic continuation of $V_m(z,s)$ to $\Re(s)>1/2$ by
the meromorphicity of the resolvent (see e.g
\cite{Faddeev:1967a}). The singular points are simple and contained
in the set  of $s\in \C$ such that $s(1-s)$ is an eigenvalue of
$\Laplace$. Since $\Laplace$ is self-adjoint, these lie on the real line (when
$\Re(s)>1/2$). The potential pole at $s=1$ has residue a constant
times 
\begin{equation*}
 \int_{\GmodH}(2\pi m)^2V_m(z,3)-H(z,1)d\mu
\end{equation*}
The contribution from $h''(y)y^{s+2}e(mx)+2h'(y)y^{s+1}e(mx)$ is
easily seen to be zero if $T$ is large enough using $\int_0^1
e(mx)dx=0$ when $m\neq 0$. To handle the rest we may unfold to get  
\begin{align*}
 (2\pi m)^2&\int_{\GmodH}(V_m(z,3)-h(y)y^3e(mx))d\mu(z)\\& = (2\pi
m)^2\int_0^\infty\int_0^1(y^3-h(y)y^3)e(mx)y^{-2}dxdy=0 
\end{align*}
so $V_m(z,s)$ is analytic at $s=1$. The claim about growth in vertical
strips is proved as in \cite[Lemma 3.1]{PetridisRisager:2004a}. \qed

\begin{remark}
  It is possible to extend the main idea of the proof of Proposition
\ref{continuation} to prove the meromorphic continuation of $V_m(z,s)$ to
$s\in \C$. 
But since our main aim was to prove Theorem \ref{equidistribution} we
shall stop here.  
\end{remark}

\bibliographystyle{plain}
\def\cprime{$'$}

\end{document}